\documentclass[11pt,A4paper]{article}

\usepackage[left=32mm,right=32mm,top=30mm,bottom=32mm]{geometry}
\usepackage{labelfig}
\usepackage{epsfig}
\usepackage{epstopdf}

\usepackage{xfrac}

\usepackage[percent]{overpic}

\usepackage{color}
\usepackage{amsthm,amsmath,amssymb}
\usepackage{booktabs}
\usepackage{mathpazo}
\usepackage{microtype}
\usepackage{overpic}
\usepackage{bm}
\usepackage{sectsty}
\usepackage[
	pdftitle={PDFTitle},
	pdfauthor={Hugo Parlier},
	ocgcolorlinks,
	linkcolor=linkred,
	citecolor=linkred,
	urlcolor=linkblue]
{hyperref}

\definecolor{linkred}{rgb}{0.48,0.1,0.05}
\definecolor{linkblue}{RGB}{16, 78, 139}

\usepackage[hang,flushmargin]{footmisc}
\usepackage{enumitem}
\usepackage{titlesec}
	\titlespacing{\section}{0pt}{12pt}{0pt}
	\titlespacing{\subsection}{0pt}{6pt}{0pt}
	
\titlelabel{\thetitle.\quad}

\makeatletter 

\long\def\@footnotetext#1{%
\H@@footnotetext{%
\ifHy@nesting 
\hyper@@anchor{\@currentHref}{#1}%
\else 
\Hy@raisedlink{\hyper@@anchor{\@currentHref}{\relax}}#1%
\fi 
}}

\def\@footnotemark{%
\leavevmode 
\ifhmode\edef\@x@sf{\the\spacefactor}\nobreak\fi 
\H@refstepcounter{Hfootnote}%
\hyper@makecurrent{Hfootnote}%
\hyper@linkstart{link}{\@currentHref}%
\@makefnmark 
\hyper@linkend 
\ifhmode\spacefactor\@x@sf\fi 
\relax 
}%

\ifFN@multiplefootnote%
\renewcommand*\@footnotemark{%
\leavevmode 
\ifhmode 
\edef\@x@sf{\the\spacefactor}%
\FN@mf@check 
\nobreak 
\fi 
\H@refstepcounter{Hfootnote}%
\hyper@makecurrent{Hfootnote}%
\hyper@linkstart{link}{\@currentHref}%
\@makefnmark 
\hyper@linkend 
\ifFN@pp@towrite 
\FN@pp@writetemp 
\FN@pp@towritefalse 
\fi 
\FN@mf@prepare 
\ifhmode\spacefactor\@x@sf\fi 
\relax%
}%
\fi 

\makeatother 

\theoremstyle{plain}
\newtheorem{theorem}{Theorem}[section]

\newtheorem{lemma}[theorem]{Lemma}
\newtheorem{corollary}[theorem]{Corollary}

\theoremstyle{definition}

\newcommand{\R}{{\mathbb R}}

\newcommand{\M}{{\mathcal M}}
\newcommand{\B}{{\mathcal B}}

\newcommand{\area}{{\rm area}}
\newcommand{\arcsinh}{{\,\rm arcsinh}}
\newcommand{\arccosh}{{\,\rm arccosh}}

\linespread{1.25}

\sectionfont{\large \sc}
\subsectionfont{\normalsize}

\setlength{\parindent}{0pt}
\setlength{\parskip}{6pt}


\long\def\symbolfootnote[#1]#2{\begingroup%
\def\thefootnote{\fnsymbol{footnote}}\footnote[#1]{#2}\endgroup}

\def\blfootnote{\xdef\@thefnmark{}\@footnotetext}

\begin{document}

{\Large \bfseries \sc A short note on short pants}

{\bfseries Hugo Parlier\symbolfootnote[2]{\normalsize Research supported by Swiss National Science Foundation grant number PP00P2\textunderscore 128557\\ 
{\em Address:} Department of Mathematics, University of Fribourg, Switzerland \\
{\em Email:} \href{mailto:hugo.parlier@unifr.ch}{hugo.parlier@unifr.ch}\\
{\em 2010 Mathematics Subject Classification:} Primary: 30F10, 32G15. Secondary: 53C22. \\
{\em Key words and phrases:} hyperbolic surfaces, geodesics, pants decompositions
}}

{\em Abstract.} It is a theorem of Bers that any closed hyperbolic surface admits a pants decomposition consisting of curves of bounded length where the bound only depends on the topology of the surface. The question of the quantification of the optimal constants has been well studied and the best upper bounds to date are linear in genus, a theorem of Buser and Sepp\"al\"a. The goal of this note is to give a short proof of a linear upper bound which slightly improve the best known bound.

\vspace{1cm}

\section{Introduction} \label{introduction}
A pants decomposition of a hyperbolic surface is a maximal collection of disjoint simple closed geodesics, which as its name indicates, decomposes the surface into three holed spheres or pairs of pants. In the case of closed surfaces of genus $g\geq 2$, a pants decomposition contains $3g-3$ curves which decompose the surface into $2g-2$ pairs of pants. Any surface admits an infinite number of pants decompositions and even up to homeomorphism the number of different types of pants decomposition grows quickly (roughly like $g!$). Bers proved that there exists a constant $\B_g$ which only depends on the genus $g$ such that any closed hyperbolic surface of genus $g$ has a pants decomposition with all curves of length less than $\B_g$.

The first notable step in the direction of quantifying $\B_g$ was obtained by Buser \cite{BuserHab} where an upper bounds of order $g \log g$ and lower bounds of order $\sqrt{g}$ were established. The first upper bounds linear in $g$ were obtained by Buser and Sepp\"al\"a \cite{BuserSeppala92} and Buser extended these bounds to the case of variable curvature \cite{BuserBook}. The best bounds known to date \cite[Th. 5.1.4]{BuserBook} are $6\sqrt{3\pi} (g-1)$ so the best known linear factor is $ \approx 18.4$.

It should also be noted that the direct method of computing the optimal constant in each genus seems out of reach as the only known constant is $\B_2$, a result of Gendulphe \cite{GendulpheBers}.

The goal of this note is to offer a short proof of a linear upper bounds which provide a slight improvement on previously known bounds.

\begin{theorem}\label{thm:main}
Every closed hyperbolic surface of genus $g\geq 2$ has a pants decomposition with all curves of length at most
$$
4\pi (g-1) + 4 R_g
$$
where $R_g$ is at a logarithmic function in $g$ which can be taken to be
$$
R_g= \arccosh\frac{1}{\sqrt{2} \sin\frac{\pi}{12g-6}}< \log(4g-2) + \arcsinh \,1.
$$
\end{theorem}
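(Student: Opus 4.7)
The plan is to produce the pants decomposition from the geometry of a single well-chosen base point $p\in S$. The first and most important step is a metric lemma asserting that there exists a point $p\in S$ whose injectivity radius is at most $R_g$. I would prove this by contradiction: if the injectivity radius strictly exceeded $R_g$ at every point, one could embed a hyperbolic disk of radius $R_g$ (area $2\pi(\cosh R_g-1)$) around every point, and a packing argument would violate $\area(S)=4\pi(g-1)$. The unusual form $\cosh R_g=1/(\sqrt{2}\sin(\pi/(12g-6)))$ strongly suggests the packing is refined: instead of inserting full disks one should subdivide each embedded disk into $6g-3$ congruent sectors (matching the number of curves in a pants decomposition, hence the half-angle $\pi/(12g-6)$), and compare the total area of sectors to $\area(S)$ through the chord of each wedge, which is where the $\sqrt{2}$ enters.

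Given such a $p$, there is a geodesic loop at $p$ of length at most $2R_g$; after the standard surgery that replaces a non-simple geodesic loop by a simple closed geodesic of no greater length, I would take this as the first curve $\gamma_1$ in the decomposition. The rest of the pants decomposition would be built inductively. After cutting along $\gamma_1,\ldots,\gamma_i$, on the resulting bordered subsurface I would seek the next curve either as a short simple closed geodesic in the interior (bounded by $2R_g$ via the same injectivity radius argument, applied after doubling if the component has boundary) or, when no such interior curve exists, as the third boundary of a pair of pants obtained by a short arc $\sigma$ joining two existing boundary components $\partial_1,\partial_2$. The resulting new curve has length at most $\ell(\partial_1)+\ell(\partial_2)+2\ell(\sigma)$.

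The main obstacle is twofold. First, the refined injectivity radius lemma requires a sector subdivision rather than a naive disk packing: identifying the correct wedge geometry that produces precisely the constant $R_g$ is the sharp geometric input, and optimizing it is what should squeeze the linear factor down from $6\sqrt{3\pi}$. Second, the inductive step must deliver the uniform bound $4\pi(g-1)+4R_g$ on \emph{every} curve, including the last (which in the worst case absorbs almost all of the area). To obtain this, I would use Gauss--Bonnet to show that the total boundary length of the intermediate subsurfaces is controlled by $\area(S)=4\pi(g-1)$, while each new curve picks up at most two arc-or-loop contributions of length $\le 2R_g$ apiece, giving the additive $4R_g$.
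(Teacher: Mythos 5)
Your proposal reproduces the general skeleton of the argument (short loops forced by area considerations, then an inductive cut-and-arc construction of the pants decomposition), but two of its load-bearing steps are not correct as stated. First, the constant $R_g$ does not come from a sector-refined disk packing around a single well-chosen point. In the actual proof, one needs a short loop based at \emph{every} point (the algorithm repeatedly picks new base points far from the current boundary), and the sharp input is Bavard's theorem: at any $x$ there is a geodesic loop of length at most $2\arccosh\frac{1}{2\sin\frac{\pi}{12g-6}}$, where $12g-6$ comes from the regular $(12g-6)$-gon of an extremal embedded disk, not from the $3g-3$ curves of a pants decomposition. The $\sqrt{2}$ enters elsewhere entirely: after normalizing the surface (via the Length Expansion Lemma) so that every closed geodesic has length at least $2\arcsinh 1$, the closed geodesic homotopic to the loop at $x$ lies within distance $\arccosh\left(\cosh\frac{\ell(\gamma_x)}{2}\coth\frac{\ell(\gamma)}{2}\right)$ of $x$, and $\coth(\arcsinh 1)=\sqrt 2$. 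This distance control is not decorative: it is what guarantees that a loop based at a point at distance $>R_g$ from $\partial S_k$ yields a geodesic that is genuinely new and not one of the existing boundary curves, a point your induction (and the proposed doubling trick) does not address.

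Second, and decisively, the final claim that Gauss--Bonnet controls the total boundary length of the intermediate subsurfaces by $\area(S)$ is false: a subsurface with geodesic boundary has area $2\pi|\chi|$ regardless of how long its boundary curves are, so area gives no bound whatsoever on boundary length. Without a substitute, each step of your induction may add $4R_g$ to the boundary, and $3g-3$ steps only give a bound of order $gR_g\sim g\log g$, i.e.\ Buser's old estimate rather than a linear one. The paper's substitute is a fail-safe step, which is the real point of the proof: as soon as $\ell(\partial S_k)\geq 4\pi(g-1)$, the embedded $r$-neighborhood of the boundary has area at least $\ell(\partial S_k)\sinh r$, hence $r<\arcsinh 1$ before two collars bump, producing an arc $c$ with $\ell(c)<2\arcsinh 1$; the right-angled hexagon identity with $\cosh\ell(c)<3$ then gives $\ell(\tilde\alpha)<\ell(\alpha_1)+\ell(\alpha_2)$, so the total boundary length \emph{decreases} at such steps. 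This self-correcting mechanism is what caps every curve at $4\pi(g-1)+4R_g$; your proposal has no equivalent of it, and that is a genuine gap rather than a detail to be filled in.
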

The theorem provides an improvement on the factor in front of the genus from $\approx 18.4$ to $\approx 12.6$. The true growth rate of $\B_g$ remains unknown. It follows from the bounds in the closed case that surfaces with $n$ cusps and genus $g$ also have short pants decompositions where the bounds depend on $n$ and $g$ this time. For fixed genus and growing number of cusps, the growth rate of the optimal constants is known to grow like $\sqrt{n}$ (see \cite{Balacheff-Parlier-Sabourau, Balacheff-Parlier}) which seems to indicate that the growth rate for the closed surfaces might be of order $\sqrt{g}$. However, if one considers sums of lengths of curves in a pants decomposition instead of the maximum length then the case of cusps is very different from the genus case (compare \cite{Balacheff-Parlier-Sabourau, Guth-Parlier-Young}). 

{\bf Acknowledgements.} 
Some of the ideas in this paper came while I was preparing a master class at the Schr\"odinger Institute in Vienna during the special program on ``Teichm\"uller Theory", Winter 2013, and I thank the organizers for inviting me. I'd also like to thank Robert Young for his helpful comments.

\section{Preliminaries} \label{preliminaries}

To a curve $\gamma$ or a free homotopy class $[\gamma]$ of curve on a topological surface $\Sigma$ we associate a length function $\ell_{S}(\gamma)$ which associates to a hyperbolic structure $S$ on $\Sigma$ the length of the unique closed geodesic in $[\gamma]$. A first tool that we shall use is the following lemma which in particular will allow us to restrict the proof of the main theorem to the case of surfaces with systole of length at least $2 \arcsinh \, 1$. 

\begin{lemma}[Length expansion lemma]\label{lem:LE} Let $\Sigma$ be a topological surface with $n > 0$ boundary curves $\gamma_1,\hdots,\gamma_n$. For any hyperbolic surface $S \cong \Sigma$ and any
$(\varepsilon_1,\hdots,\varepsilon_n) \in (\R^+)^n\setminus \{0\}$ there exists a hyperbolic surface $S'\cong \Sigma$ with
$$\ell_{S'}(\gamma_1)=\ell_S(\gamma_1)+\varepsilon_1,\hdots,\ell_{S'}(\gamma_n)=\ell_S(\gamma_n)+\varepsilon_n$$
and such that any non-trivial simple closed curve $\gamma\subset \Sigma$ satisfies
$$
\ell_{S'}(\gamma)>\ell_{S}(\gamma).
$$
\end{lemma}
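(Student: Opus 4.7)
The plan is to realize $S'$ as the endpoint of an explicit deformation of $S$ in the Teichm\"uller space $\mathcal{T}(\Sigma)$ built from Fenchel--Nielsen coordinates. Since $n > 0$, the boundary curves $\gamma_1, \ldots, \gamma_n$ form a multicurve that can be extended to a pants decomposition $\mathcal{P}$ of $\Sigma$ by adding interior simple closed curves $\alpha_1, \ldots, \alpha_m$. In the associated Fenchel--Nielsen coordinates $(\ell(\gamma_i); \ell(\alpha_j), \tau(\alpha_j))$, I would define $S'$ by prescribing $\ell_{S'}(\gamma_i) = \ell_S(\gamma_i) + \varepsilon_i$, together with $\ell_{S'}(\alpha_j) = \ell_S(\alpha_j) + \delta_j$ for some small positive $\delta_j$ to be chosen at the end, while keeping every twist coordinate $\tau(\alpha_j)$ unchanged.

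To verify the monotonicity property for an arbitrary non-trivial simple closed curve $\gamma$, I would cut $\gamma$ along $\mathcal{P}$ to obtain a collection of geodesic arcs, each contained in a single pair of pants $P$ of $\mathcal{P}$. In $P$, such an arc is determined (up to isotopy rel boundary) by its homotopy class, and its length is an explicit function of the three boundary lengths of $P$ and of the twist parameters at the boundaries shared with neighbouring pants. When $\gamma$ itself belongs to $\mathcal{P}$ the length increase is immediate; otherwise $\ell_S(\gamma)$ is the sum of the arc lengths just described, and one needs to show that each of these summands strictly increases along the deformation.

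The main obstacle is the hyperbolic-trigonometric analysis in a single pair of pants. The difficulty is that under a pure length deformation some natural quantities, such as the common perpendiculars (``seams'') between boundaries, can actually \emph{shrink} as the three boundaries are lengthened, so one cannot argue naively that ``bigger pants have longer arcs''. The right statement is rather about \emph{twisted} arcs: for any fixed pair of twist data at two boundary components of $P$, the length of the corresponding geodesic arc strictly increases when any of the three boundary lengths is increased. I would expect to prove this by differentiating the hyperbolic distance formula between two points on distinct geodesics of the universal cover and checking positivity of the partial derivatives; once this is established, strict monotonicity holds arc by arc, and one recovers $\ell_{S'}(\gamma) > \ell_S(\gamma)$ by summation.

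Finally, with the arc-level monotonicity in hand, the role of the auxiliary parameters $\delta_j > 0$ is simply to ensure that every pants curve in $\mathcal{P}$ contributes a strictly positive length change, so that arcs hitting only interior pants curves (and no boundary $\gamma_i$) also strictly lengthen. Since $(\varepsilon_1, \ldots, \varepsilon_n) \neq 0$ and the $\delta_j$ are all positive, every pair of pants sees at least one of its boundary lengths strictly grow, and the construction produces the required $S'$.
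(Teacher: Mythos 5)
The paper does not actually prove this lemma: it quotes it as a known result, pointing to Thurston's spine preprint for the claim, to \cite{ParlierLengths} for a direct proof, and to \cite{PapaTheretShort} for a stronger version. So your proposal stands or falls on its own, and unfortunately its central claim is false. You assert that, with the twist data frozen, every geodesic arc in a pair of pants strictly lengthens when any of the three cuff lengths is increased; but the ``seams'' you yourself flag as problematic \emph{are} such arcs (they are the arcs with zero relative twist), so the difficulty you set aside is precisely a counterexample. Concretely, in a pair of pants with cuff lengths $(\ell,\ell,L)$ the common perpendicular $d$ between the two cuffs of length $\ell$ satisfies
$$
\cosh d \;=\; \frac{\cosh\frac{L}{2}+\cosh^2\frac{\ell}{2}}{\sinh^2\frac{\ell}{2}} \;=\; 1+\frac{\cosh\frac{L}{2}+1}{\sinh^2\frac{\ell}{2}},
$$
which is strictly \emph{decreasing} in $\ell$. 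Gluing the two cuffs of length $\ell$ to each other with zero twist produces a one-holed torus with boundary length $L$, interior pants curve $\alpha$ of length $\ell$, and dual curve $\beta$ of length exactly $d$: increasing $\ell(\alpha)$ while keeping the twist fixed strictly shortens $\beta$. Hence ``increase all Fenchel--Nielsen lengths, keep all twists'' does not lengthen all simple closed geodesics, and no arc-by-arc bookkeeping at fixed twist can establish it. The auxiliary $\delta_j>0$ do not repair this: if such a one-holed torus sits in the interior of $\Sigma$, the curve $\beta$ is governed only by interior coordinates, and your deformation can shorten it; one would have to choose the $\delta_j$ so as to balance $\partial\ell_\beta/\partial\ell(\alpha)<0$ against $\partial\ell_\beta/\partial L>0$ simultaneously for all of the infinitely many simple closed curves, which is exactly the content of the lemma and is nowhere argued.

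For comparison, the proofs in the literature avoid Fenchel--Nielsen monotonicity altogether. The Thurston-style argument uses strip deformations: one inserts thin hyperbolic strips along embedded geodesic arcs meeting the boundary; such a deformation does not decrease the length of any closed geodesic and strictly increases every geodesic crossing the arcs, so taking a family of arcs that fills the surface and tuning the strip widths yields the prescribed boundary increments $\varepsilon_1,\dots,\varepsilon_n$ together with the strict inequality for all simple closed curves; \cite{ParlierLengths} and \cite{PapaTheretShort} give careful versions of this. If you wish to keep a coordinate approach, you would need a genuinely different monotonicity statement (necessarily involving a controlled change of twists as well as lengths), and establishing it is not easier than the lemma itself.
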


This result seems to have been known for a long time, as it is claimed in \cite{ThurstonSpine} (also see \cite{ParlierLengths} for a direct proof and \cite{PapaTheretShort} for a stronger version).

The following result, due to Bavard \cite{BavardDisque}, is sharp.

\begin{lemma}[Marked systoles]\label{lem:MS} For any $x\in S$, $S$ a closed hyperbolic surface of genus $g$, there exists a geodesic loop $\delta_x$ based in $x$ such that 
$$
\ell(\delta_x) \leq 2 \arccosh \frac{1}{2 \sin \frac{\pi}{12g-6}}.
$$
\end{lemma}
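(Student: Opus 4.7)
The plan is to analyse the Dirichlet fundamental polygon of $\Gamma = \pi_1(S)$ centred at a lift $\tilde x$ of $x$ in the universal cover $\pi\colon \Hyp \to S$. Let $P \subset \Hyp$ be that polygon. Each side of $P$ lies on the perpendicular bisector of a segment $[\tilde x, g_i \tilde x]$ for some $g_i \in \Gamma$, and the geodesic loop at $x$ corresponding to $g_i$ has length $d(\tilde x, g_i \tilde x) = 2r_i$, where $r_i$ is the distance from $\tilde x$ to the bisector. It therefore suffices to bound $\min_i r_i$.

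The first input is combinatorial: $P$ has $N \leq 12g-6$ sides. This follows from the Euler characteristic relation $V - E + F = 2 - 2g$ applied to the quotient, with $F = 1$, $E = N/2$, and $V \leq N/3$ since polygon vertices are identified at most three-to-one; combined with Gauss--Bonnet for $P$, whose area is $4\pi(g-1)$, the same computation gives $\sum_j \phi_j = 2\pi V \leq 2\pi N/3$ for the polygon angles, with equality in the generic case $N=12g-6$.

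I would then triangulate $P$ by joining $\tilde x$ to each vertex, producing $N$ geodesic triangles $T_i$ with apex $\tilde x$ whose apex angles sum to $2\pi$. Dropping the perpendicular from $\tilde x$ onto the $i$-th side splits $T_i$ into two hyperbolic right triangles, each of height $r_i$. In each such right triangle, with apex half-angle $\alpha$ and vertex angle $\psi$ at the polygon vertex, the standard right-triangle identity yields
$$
\cos\psi = \sin\alpha \cdot \cosh r_i.
$$
Over the $2N$ right triangles one has $\sum_k \alpha_k = 2\pi$ and $\sum_k \psi_k = 2\pi N/3$, with respective averages $\pi/N$ and $\pi/3$.

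The main obstacle is extracting the sharp bound $\min_i \cosh r_i \leq 1/(2\sin(\pi/N))$ from these two averages. The factor $1/2$ is exactly $\cos(\pi/3)$, so a separate application of Jensen's inequality to $\sin$ on $[0,\pi/2]$ and $\cos$ on $[0,\pi/2]$ only combines to the weaker bound $\min_i \cosh r_i \leq 1/\sin(\pi/N)$. The sharp inequality must be obtained by treating the two constraints together via the per-triangle relation $\cos\psi_k = \sin\alpha_k \cosh r_k$; the extremal configuration is the regular $(12g-6)$-gon with every vertex angle equal to $2\pi/3$, in which every right triangle is congruent with $\alpha=\pi/N$, $\psi=\pi/3$, and $\cosh r = 1/(2\sin(\pi/N))$. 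A convexity/symmetrisation argument on $(\alpha_k,\psi_k)$ preserving the two linear constraints shows this regular configuration maximises $\min_i \cosh r_i$, giving the claimed bound; the non-generic cases $N<12g-6$ are automatic since $\arccosh(1/(2\sin(\pi/N)))$ is increasing in $N$.
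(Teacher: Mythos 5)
First, note that the paper does not prove this lemma at all: it is quoted from Bavard \cite{BavardDisque}, and the paper only sketches a weaker area-comparison bound (disk of radius $r$ has area $2\pi(\cosh r -1)\le 4\pi(g-1)$) which has the same order of growth. So you are attempting to reprove Bavard's sharp theorem from scratch. Your skeleton is indeed the right one (Dirichlet domain with $N\le 12g-6$ sides, the right-triangle relation $\cos\psi=\sin\alpha\cosh r$, the regular $(12g-6)$-gon with vertex angles $2\pi/3$ as the extremal case), but the proposal has a genuine gap at precisely the step you yourself flag as ``the main obstacle'': the joint optimization over $(\alpha_k,\psi_k)$ is only asserted (``a convexity/symmetrisation argument \dots shows''), not carried out, and the argument you do describe (separate Jensen on $\sin$ and $\cos$) admittedly only yields the non-sharp bound. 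The missing idea is to apply Jensen to the single function $f(\alpha)=\arccos(\cosh R\,\sin\alpha)$ with $\cosh R=\frac{1}{2\sin\frac{\pi}{12g-6}}$: a direct computation gives $f''(\alpha)=-\frac{(\cosh^2R-1)\cosh R\sin\alpha}{(1-\cosh^2R\sin^2\alpha)^{3/2}}\le 0$, so if every $r_k>R$ then $\sum_k\psi_k<\sum_k f(\alpha_k)\le 2N f\bigl(\tfrac{\pi}{N}\bigr)$, and for $N=12g-6$ both sides equal $\tfrac{2\pi N}{3}$, a contradiction. Without this (or an equivalent) argument the key inequality is simply unproven.

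Two further points would need repair even with that fix. First, your reduction of the non-generic cases is backwards: for $N<12g-6$ the vertex-angle budget is not $\tfrac{2\pi N}{3}$ but $\sum_k\psi_k=\pi N-\pi(4g-2)$ (Gauss--Bonnet plus $N=4g-2+2V$), which is strictly smaller, so the per-$N$ bound $\arccosh\frac{1}{2\sin(\pi/N)}$ is not what the argument yields and monotonicity in $N$ does not dispose of these cases; one must instead keep $R=R_{12g-6}$ fixed and check $\sin\bigl((2g-1)\beta\bigr)\le\cosh R\,\sin\beta$ for $\beta=\pi/N$ in the admissible range, which follows from the monotonicity of $\sin(m\beta)/\sin\beta$. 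Second, splitting each cone triangle into two right triangles presumes the foot of the perpendicular from $\tilde x$ lies on the Dirichlet side; this can fail (exactly as for Euclidean Voronoi cells whose side misses the midpoint of the defining segment), producing an obtuse base angle, i.e.\ a signed $\alpha_k<0$, where $f$ is convex rather than concave, so this case must be excluded or treated separately. (Also a small slip: vertices are identified \emph{at least} three-to-one, which is what gives $V\le N/3$.) As it stands, the proposal is a correct outline of a Bavard-type packing proof, but not a proof.
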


What Bavard actually proves is that the above value is a sharp bound on the diameter of the largest embedded open disk of the surface. A weaker version of this lemma can be obtained by comparing the area of an embedded disk to the area of the surface. The area of $D$ an embedded disk of radius $r$ on a hyperbolic surface is the same as the area of such a disk in the hyperbolic plane so
$$
\area(D) = 2\pi (\cosh r - 1).
$$
Comparing this to $\area(S) = 4\pi (g-1)$ shows
$$
r < 2 \log(2g-1 + \sqrt{2g(2g-2)}) < 2 \log(4g-2).
$$
This weaker bound can be found in \cite{BuserBook}[Lemma 5.2.1] but note that the order of growth of this bound is the same as in Bavard's result.

Consider a hyperbolic surface $S$ possibly with geodesic boundary. In the free homotopy class of a simple closed geodesic loop $\gamma_x$ based at a point $x\in S$ lies a unique simple closed geodesic $\gamma$ (possibly a cusp or a boundary geodesic). In the event where $\gamma$ is not a cusp, it will be useful to bound the Hausdorff distance between $\gamma$ and $x$.

\begin{lemma}
Let $S,\gamma_x,\gamma$ be as above. Then
$$
\max_{y\in \gamma} d(x,y) < \arccosh \left( \cosh\frac{\ell(\gamma_x)}{2} \coth \frac{\ell(\gamma)}{2}\right).
$$
\end{lemma}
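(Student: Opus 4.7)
The plan is to lift the picture to the universal cover $\Hyp$ and apply standard hyperbolic trigonometry for a translation acting on a point off its axis. First I would pick a lift $\tilde x\in\Hyp$ of $x$, and let $T$ be the deck transformation represented by the based loop $\gamma_x$. Since $\gamma_x$ is freely homotopic to the primitive simple closed geodesic $\gamma$, the isometry $T$ is itself primitive with translation length $\ell(\gamma)$, and its axis $\tilde\gamma$ projects onto $\gamma$. Because $\gamma_x$ is a simple geodesic loop, its lift starting at $\tilde x$ is the embedded geodesic segment $[\tilde x,T(\tilde x)]\subset\Hyp$, of length exactly $\ell(\gamma_x)$.

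Write $p$ for the orthogonal projection of $\tilde x$ onto $\tilde\gamma$ and set $d:=d_\Hyp(\tilde x,p)$. The crux of the argument is the classical identity
$$
\sinh\frac{\ell(\gamma_x)}{2}\;=\;\cosh(d)\,\sinh\frac{\ell(\gamma)}{2},
$$
which I would verify by a short computation in the upper half-plane (placing $\tilde\gamma$ as the imaginary axis and $T$ as $z\mapsto e^{\ell(\gamma)}z$), or equivalently by examining the Saccheri quadrilateral with vertices $\tilde x$, $p$, $T(p)$, $T(\tilde x)$. This rearranges to $\cosh d=\sinh(\ell(\gamma_x)/2)/\sinh(\ell(\gamma)/2)$.

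For the Hausdorff bound, given $y\in\gamma$ I would pick the lift $\tilde y\in\tilde\gamma$ closest to $p$, so that $d_\Hyp(p,\tilde y)\le\ell(\gamma)/2$. The Pythagorean identity in the right triangle $\tilde x\,p\,\tilde y$ (right angle at $p$) gives
$$
\cosh d_\Hyp(\tilde x,\tilde y)\;=\;\cosh(d)\,\cosh d_\Hyp(p,\tilde y)\;\le\;\cosh(d)\cosh\frac{\ell(\gamma)}{2},
$$
and, together with $d(x,y)\le d_\Hyp(\tilde x,\tilde y)$ and the expression for $\cosh d$, this yields
$$
\cosh d(x,y)\;\le\;\sinh\frac{\ell(\gamma_x)}{2}\,\coth\frac{\ell(\gamma)}{2}.
$$
The strict bound of the lemma then follows from $\sinh\alpha<\cosh\alpha$ applied at $\alpha=\ell(\gamma_x)/2>0$ together with the monotonicity of $\arccosh$.

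The main (and essentially only) technical point is the translation-length identity above; everything else reduces to a right-triangle estimate on the universal cover, so I do not expect any real difficulty in filling in the details.
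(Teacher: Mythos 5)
Your proof is correct, and while it rests on the same underlying picture as the paper's (the region spanned by the loop $\gamma_x$ and the geodesic $\gamma$, cut by perpendiculars to $\gamma$), the execution is genuinely different. The paper stays on the surface: it uses that $\gamma_x$ and $\gamma$ cobound a cylinder, cuts it into two tri-rectangles with consecutive sides $\ell(\gamma)/2$ and $d(\gamma_x,\gamma)$, invokes the Lambert-quadrilateral inequality $\sinh d(\gamma_x,\gamma)\sinh\frac{\ell(\gamma)}{2}<1$, and bounds $\max_{y\in\gamma}d(x,y)$ by the diagonal of the tri-rectangle. You work upstairs in the universal cover: your Saccheri quadrilateral $\tilde x,p,T(p),T(\tilde x)$ is essentially the cut-open cylinder, but your key ingredient is the exact displacement identity $\sinh\frac{\ell(\gamma_x)}{2}=\cosh(d)\sinh\frac{\ell(\gamma)}{2}$ rather than the Lambert inequality, and you treat the maximum over $y\in\gamma$ by taking the lift of $y$ within $\ell(\gamma)/2$ of the foot $p$ and applying the hyperbolic Pythagorean theorem. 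This buys three things: a slightly sharper intermediate bound $\cosh d(x,y)\le\sinh\frac{\ell(\gamma_x)}{2}\coth\frac{\ell(\gamma)}{2}$, from which the stated strict bound follows via $\sinh<\cosh$; independence from the simplicity of $\gamma_x$ and from the embeddedness of the cylinder, which the paper's proof tacitly uses; and a fully explicit justification of the ``maximum over $y$'' step, which the paper handles more tersely through the diagonal. Your primitivity remark for $T$ (so lifts of $y$ on the axis are spaced $\ell(\gamma)$ apart) is exactly where simplicity of $\gamma$ enters and is correctly placed. One cosmetic point: the lift of $\gamma_x$ being a geodesic segment from $\tilde x$ to $T(\tilde x)$ follows from $\gamma_x$ being a geodesic loop (locally geodesic away from the basepoint), not from its being simple.
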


\begin{proof}
Note that $\gamma_x$ and $\gamma$ bound a cylinder that can be cut into two tri-rectangles with consecutive sides of length $\sfrac{\ell(\gamma)}{2}$ and $d(\gamma_x,\gamma)$ as in figure \ref{fig:loop}. 

\begin{figure}[h]
\leavevmode \SetLabels
\L(.755*.45) $d(\gamma_x,\gamma)$\\
\L(.51*.32) $\frac{\ell(\gamma)}{2}$\\
\L(.51*.84) $\frac{\ell(\gamma_x)}{2}$\\
\endSetLabels
\begin{center}
\AffixLabels{\centerline{\epsfig{file =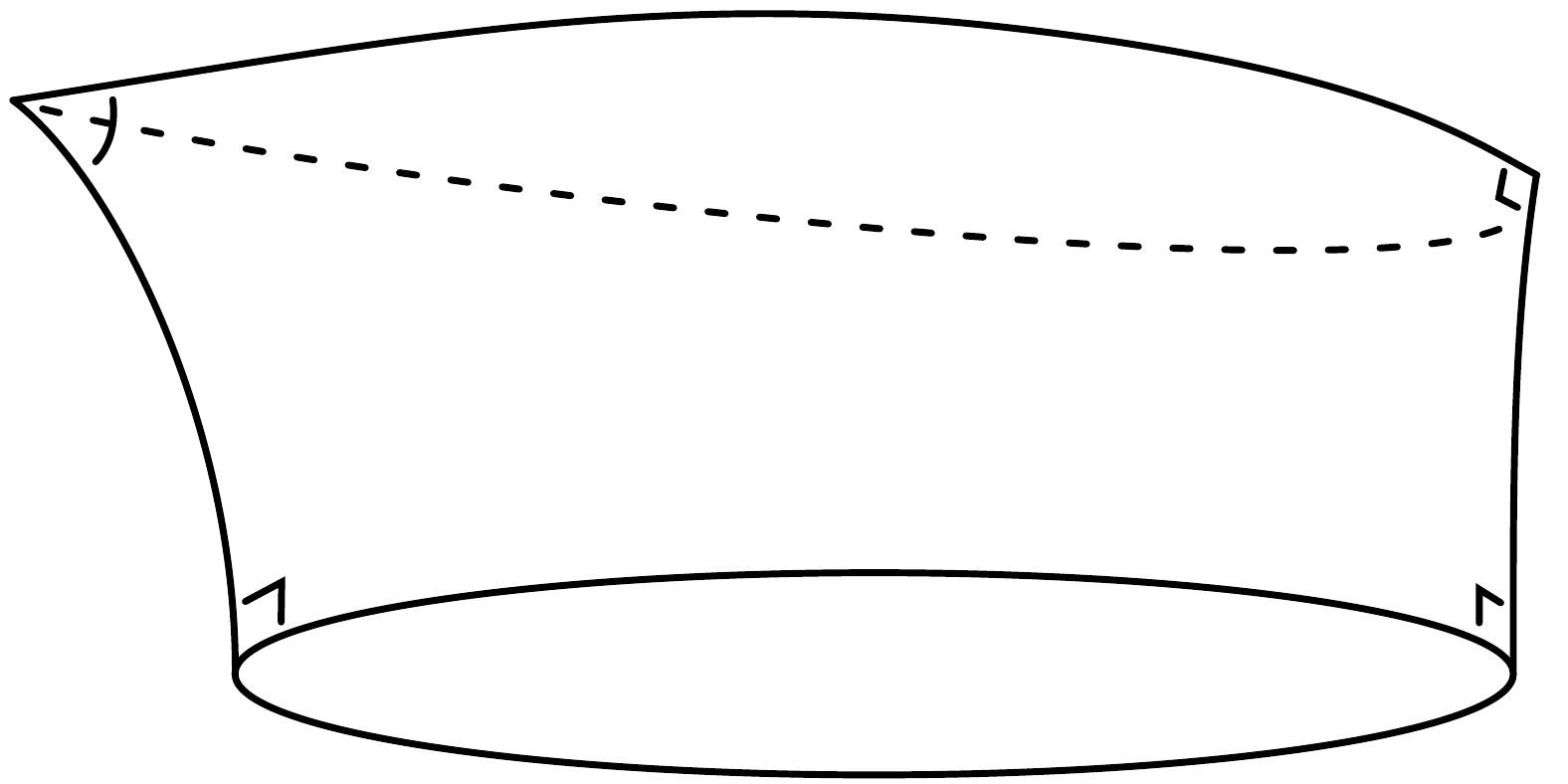,width=8.0cm,angle=0}}}
\vspace{-18pt}
\end{center}
\caption{From a geodesic loop to a closed geodesic} \label{fig:loop}
\end{figure}

Hyperbolic trigonometry in the tri-rectangle implies
\begin{equation}\label{eqn:tri}
\sinh d(\gamma_x,\gamma) \sinh \frac{\ell(\gamma)}{2} < 1.
\end{equation}
Now the maximum distance between $x$ and $\gamma$ is at most the length of the diagonal of the tri-rectangle. By hyperbolic trigonometry in one of the right angles triangles bounded by this diagonal, we obtain for all $y\in \gamma$:
$$
\cosh d(x,y) \leq \cosh d(\gamma_x,\gamma) \cosh \frac{\ell(\gamma_x)}{2}
$$
which via equation \ref{eqn:tri} becomes
\begin{eqnarray*}
d(x,y)& < &\arccosh \left(\cosh \left(\arcsinh \frac{1}{\sinh\frac{\ell(\gamma)}{2}}\right) \cosh \frac{\ell(\gamma_x)}{2}\right) \\
&=& \arccosh \left( \cosh\frac{\ell(\gamma_x)}{2} \coth \frac{\ell(\gamma)}{2}\right).
\end{eqnarray*}
\end{proof}

It is the following corollary of these lemmas that we shall use in the sequel. It is obtained by replacing $\ell(\gamma_x)$ by Bavard's bound, $\ell(\gamma)$ with $2 \arcsinh 1$ and by a simple manipulation.

\begin{corollary}\label{cor:distance}
Let $\gamma_x$ be the shortest geodesic loop based in $x\in S$ a closed surface and $\gamma$ the unique closed geodesic in its homotopy class. If $\ell(\gamma)\geq 2 \arcsinh 1$, then for all $y\in \gamma$
$$
d(x,y) < R_g:=\arccosh\frac{1}{\sqrt{2} \sin\frac{\pi}{12g-6}}.
$$
\end{corollary}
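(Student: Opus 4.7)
The plan is to simply substitute the two preceding bounds into the inequality provided by the previous lemma and simplify. From that lemma we have
$$
d(x,y) < \arccosh\left(\cosh\tfrac{\ell(\gamma_x)}{2}\,\coth\tfrac{\ell(\gamma)}{2}\right),
$$
so the corollary reduces to bounding each of the two factors inside the $\arccosh$ under the stated hypotheses.

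First I would bound $\cosh(\ell(\gamma_x)/2)$. Since $\gamma_x$ is the \emph{shortest} geodesic loop at $x$, Bavard's inequality (Lemma \ref{lem:MS}) gives $\ell(\gamma_x) \leq 2\arccosh\frac{1}{2\sin\frac{\pi}{12g-6}}$, and applying $\cosh$ to half of this directly yields
$$
\cosh\tfrac{\ell(\gamma_x)}{2} \leq \frac{1}{2\sin\frac{\pi}{12g-6}}.
$$

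Next I would bound $\coth(\ell(\gamma)/2)$. The function $\coth$ is strictly decreasing on $(0,\infty)$, and the assumption $\ell(\gamma) \geq 2\arcsinh 1$ gives $\sinh(\ell(\gamma)/2) \geq 1$, whence $\cosh(\ell(\gamma)/2) \leq \sqrt{2}\sinh(\ell(\gamma)/2)$. In other words
$$
\coth\tfrac{\ell(\gamma)}{2} \leq \coth(\arcsinh 1) = \sqrt{2}.
$$

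Multiplying the two bounds gives
$$
\cosh\tfrac{\ell(\gamma_x)}{2}\,\coth\tfrac{\ell(\gamma)}{2} \leq \frac{\sqrt{2}}{2\sin\frac{\pi}{12g-6}} = \frac{1}{\sqrt{2}\sin\frac{\pi}{12g-6}},
$$
and applying the (increasing) $\arccosh$ yields the desired bound $d(x,y) < R_g$. There is no real obstacle here: the corollary is pure bookkeeping on top of the two input lemmas, and the only mild point to check is that $\coth(\arcsinh 1) = \sqrt{2}$, which is immediate from $\cosh(\arcsinh 1) = \sqrt{2}$ and $\sinh(\arcsinh 1) = 1$.
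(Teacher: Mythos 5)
Your proof is correct and is exactly the paper's argument: the paper itself states that the corollary follows by substituting Bavard's bound for $\ell(\gamma_x)$ and $2\arcsinh 1$ for $\ell(\gamma)$ into the preceding lemma, and your computation (including $\coth(\arcsinh 1)=\sqrt{2}$) carries out precisely that manipulation.
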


A further small manipulation gives the following rougher upper bound on this distance where the order of growth is more apparent:
$$
R_g < \log (4g-2) + \arcsinh 1.
$$

\section{Proof of main theorem}

We begin with any surface $S\in \M_g$ and our goal is to find a pants decomposition of $S$ which contains all simple closed geodesics of $S$ of length $\leq 2 \arcsinh 1$ and which has relatively short length. Recall that all simple closed geodesics of length less than $2 \arcsinh 1$ are disjoint, and it is for this reason that this value appears. Note that $S$ may have a pants decomposition of shorter length which doesn't contain all simple closed geodesics of length $\leq 2 \arcsinh 1$ but we restrict ourselves to searching for those that do. We'll call such pants decompositions {\it admissible} pants decompositions. 

As we are only looking among admissible pants decompositions, we can immediately apply Lemma \ref{lem:LE} to deform our surface $S$ to a new surface $S'$ with all simple closed geodesics of length greater or equal to $2 \arcsinh 1$ and with the length of all curves $\gamma$ lying in admissible pants decompositions of length at least $\ell_S(\gamma)$. (If $S$ already had this property, then $S'=S$.)

We now construct algorithmically a pants decomposition of $S'$. The algorithm has two main steps and a fail-safe step.

The algorithm is initiated as follows. Consider $x_1\in S'$ and $\gamma_{x_1}$ the shortest geodesic loop based at $x_1$. We set $\gamma_1$ to be the unique closed geodesic in the same free homotopy class and we cut $S'$ along $\gamma_1$ to obtain a surface with boundary (and possibly disconnected)
$$
S_1:= S'\setminus \gamma_1.
$$
Note that as such $S_1$ is an open surface but we could equivalently treat it as a compact surface with two simple closed geodesic boundary curves by considering its closure (but not its closure {\it inside} $S$). We will proceed in the sequel in a similar way.\\

\underline{Main Step 1}

Choose $x_{k+1}\in S_k$ with $d(x_{k+1},\partial S_k) > R_g$. Consider $\gamma_{x_{k+1}}$ the shortest geodesic loop in $x_{k+1}$. Observe that in light of Corollary \ref{cor:distance} $\gamma_{x_{k+1}}$ is not freely homotopic to any of the boundary curves of $S_k$. Set $\gamma_{k+1}$ to be the unique simple closed geodesic in the same free homotopy class and consider the surface
$$
S'_{k+1}:= S_k\setminus \gamma_{k+1}.
$$
We remove any pair of pants from $S'_{k+1}$ to obtain $S_{k+1}$

If there are no more remaining $x\in S_k$ with $d(x,\partial S_k) > R_g$ we proceed to the next main step, otherwise the step is repeated. For further reference we note that all curves created in this step have length at most 
$$2 \arccosh \frac{1}{2\sin
\frac{\pi}{12g-6}}$$
and thus in particular have length strictly less than $2R_g$.

\underline{Main Step 2}

All $x\in S_k$ satisfy $d(x,\partial S_k) \leq R_g$. Consider a point $x_{k+1} \in S_k$ such that there are two distinct geodesic paths that realize the distance from $x_{k+1}$ to $\partial S_k$. This provides a non-trivial simple path $c'$ from $\partial S_k$ to $\partial S_k$, where by non-trivial we mean that $S_k\setminus c'$ does not include a disk. In particular, in the free homotopy class of $c'$ with end points allowed to glide on the same boundary curves, there is a unique simple geodesic arc $c$ of minimal length, perpendicular in both end points to $\partial S_k$. 

There are two possible topological configurations for $c$ depending on whether $c$ is a path between two distinct boundary curves or not (see figure \ref{fig:toptypes} for an illustration). 

\begin{figure}[h]
\leavevmode \SetLabels
\L(.14*.79) $\alpha_1$\\
\L(.44*.79) $\alpha_2$\\
\L(.3*.08) $\tilde{\alpha}$\\
\L(.72*1.01) $\alpha$\\
\L(.535*.2) $\tilde{\alpha}_1$\\
\L(.845*.2) $\tilde{\alpha}_2$\\
\L(.3*.74) $c$\\
\L(.65*.3) $c$\\
\endSetLabels
\begin{center}
\AffixLabels{\centerline{\epsfig{file =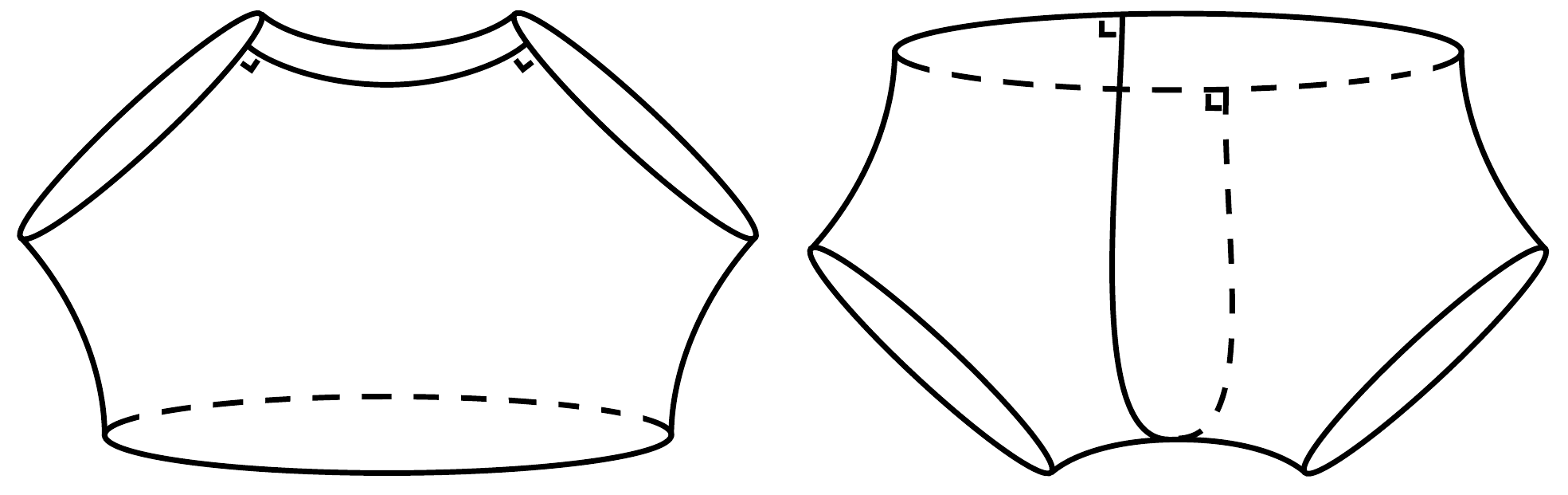,width=12.0cm,angle=0}}}
\vspace{-18pt}
\end{center}
\caption{The two topological types for path $c$} \label{fig:toptypes}
\end{figure}

\underline{Case 1}: If $c$ is a path between distinct boundary curves $\alpha_1$ and $\alpha_2$, then $c\cup \alpha_1 \cup \alpha_2$ is contained in a unique pair of pants $(\alpha_1,\alpha_2,\tilde{\alpha})$. We set 
$$
\gamma_{k+1}:=\tilde{\alpha}
$$
and 
$$S'_{k+1}:=S_k\setminus (\alpha_1,\alpha_2,\tilde{\alpha}).
$$

\underline{Case 2}: If $c$ is a path with endpoints on a single boundary curve $\alpha$ then $c\cup \alpha$ is contained in a unique pair of pants $(\alpha,\tilde{\alpha}_1,\tilde{\alpha}_2)$. 

If $\tilde{\alpha}_1 \neq \tilde{\alpha}_2$ then we set
$$
\gamma_{k+1}:=\tilde{\alpha}_1, \gamma_{k+2}:=\tilde{\alpha}_2
$$
and 
$$S'_{k+2}:=S_k\setminus (\alpha_1,\alpha_2,\tilde{\alpha}).
$$

If $\tilde{\alpha}_1=\tilde{\alpha}_2$ then $(\alpha_1,\alpha_2,\tilde{\alpha})$ is contained in a one holed torus $T$ and we set 
$$
\gamma_{k+1}:=\tilde{\alpha}_1
$$
and 
$$S'_{k+1}:=S_k\setminus T.
$$

The algorithm continues until $\gamma_{3g-3}$ is constructed, i.e., when a full pants decomposition is obtained.\\

\underline{Lengths of curves}

Begin by observing that in both types of steps described above, at each step we have 
$$
\ell(\partial S_{k+1}) <  \ell(\partial S_{k})+4 R_g.
$$
Indeed: if $S_{k+1}$ is obtained by cutting along a curve as in Step 1, then the length of the curve is strictly shorter than $2R_g$ and the boundary increases by at most twice this length. 

If $S_{k+1}$ is obtained as in Step 2, case 1, then the curve $\tilde{\alpha}$ is of length at most 
$$\ell(\alpha_1)+\ell(\alpha_2)+4R_g.$$ 
As $S_{k+1}$ is obtained by removing the pair of pants with curves $\alpha_1, \alpha_2$ and $\tilde{\alpha}$, the boundary of $S_{k+1}$ no longer contains $\alpha_1$ and $\alpha_2$ and the boundary length increases by at most $4R_g$. In Step 2, case 2, one argues similarly.

In order to ensure that the length of the constructed curves does not surpass the desired length, the algorithm contains a fail-safe step. \\

\underline{Fail-safe step}

If at any step $\ell(\partial S_k) \geq 4\pi (g-1)$ then the next curve is constructed following slightly different procedure which we describe here. First observe that if 
$$\ell(\partial S_k) \geq 4\pi (g-1)$$ with $S_k$ obtained as above, then 
$$
\ell(\partial S_k) < 4\pi (g-1) +4 R_g
$$
as at every step boundary length cannot increase by more than $4 R_g$.

We consider an $r$-neighborhood of $\partial S_k$. For small enough $r$, this neighborhood is a union of cylinders around the boundary curves. We let $r$ grow until the topology changes, i.e., until the cylinder first bump into each other. We choose one of the geodesic paths $c$ created at the bumping point. 

Here we use an area argument to bound the length of $c$. The area of an embedded $r$-neighborhood of $\ell(\partial S_k)$ is at most that of the surface thus 
$$
\ell(\partial S_k) \sinh r < 4\pi(g-1).
$$
By assumption this implies
$$
r< \arcsinh 1
$$
and thus 
$$
\ell(c) < 2 \arcsinh 1.
$$

As before, there are two topological types for $c$, case 1 and case 2 as above. In both cases, we borrow the notation from above, but we argue slightly differently for the lengths. 

In case 1 we have a pair of pants with boundary curves $\alpha_1,\alpha_2$ and $\tilde{\alpha}$ which we decompose into two right angles hexagons. By the hexagon relations we have
\begin{eqnarray*}
\cosh \frac{\ell(\tilde{\alpha})}{2}&=& \sinh \frac{\ell(\alpha_1)}{2}\sinh \frac{\ell(\alpha_2)}{2} \cosh \ell(c) - \cosh \frac{\ell(\alpha_1)}{2}\cosh \frac{\ell(\alpha_2)}{2}\\
&<&  \sinh \frac{\ell(\alpha_1)}{2}\sinh \frac{\ell(\alpha_2)}{2} \, 3 - \cosh \frac{\ell(\alpha_1)}{2}\cosh \frac{\ell(\alpha_2)}{2}\\
&<& \cosh\left(  \frac{\ell(\alpha_1)}{2}+ \frac{\ell(\alpha_2)}{2}   \right).
\end{eqnarray*}
From this 
$$
\ell(\tilde{\alpha}) < \ell(\alpha_1)+ \ell(\alpha_2).
$$
So at this step we have 
$$
\ell(\partial S_{k+1}) <  \ell(\partial S_{k}).
$$
A similar (and easier) argument shows that the same conclusion holds in case 2 by looking at a pentagon decomposition of the pants $(\alpha,\tilde{\alpha}_1,\tilde{\alpha}_2)$.

Note that after a fail-safe step the boundary length decreases so it is possible that we return to Main Step 2 but otherwise we continue to create curves while decreasing the total boundary length. 

All the curves $\gamma_k$ created are at one point boundary curves of a surface $S'_k$ from Main Step 1, a boundary curve of a surface $S_k$ from Main Step 2 or a boundary curve of $S_k$ from the fail-safe step. As such their lengths are all bounded by the total boundary lengths of these surfaces. Thus
$$
\ell(\gamma_k) < 4\pi (g-1) +4 R_g
$$
and the theorem is proved.

\addcontentsline{toc}{section}{References}
\def\cprime{$'$}

\end{document}